\newcommand{\COLORON}{0}
\newcommand{\NOTESON}{0}
\newcommand{\Debug}{0}
\newcommand{\comment}[1]{}
\newcommand{\COMMENT}[1]{}
\definecolor{darkgray}{rgb}{0.3,0.3,0.3}
\newcommand{\defi}[1]{{\color{darkgray}\emph{#1}}}
\newcommand{\acknowledgements}{\section*{Acknowledgements}}
\newtheorem{proposition}{Proposition}[section]
\newtheorem{definition}[proposition]{Definition}
\newtheorem{theorem}[proposition]{Theorem}
\newtheorem{corollary}[proposition]{Corollary}
\newtheorem{lemma}[proposition]{Lemma}
\newtheorem{conjecture}{{Conjecture}}[section]
\newtheorem{problem}[conjecture]{{Problem}}
\newtheorem{examp}[proposition]{Example}
\newcommand{\FIG}{0}
\newcommand{\note}[1]{ 

	\ 

	{\color{blue} \hspace*{-60pt} NOTE: \color{Turquoise}{\small  \tt \begin{minipage}[c]{1.1\textwidth}  #1 \end{minipage} \ignorespacesafterend }} 
	
	\ 
	
	}
\else \newcommand{\note}[1]{} \fi
\newcommand{\afsubm}[1]{ \ifnum \Debug = 1 {\mymargin{#1}}
\fi} 
\renewcommand{\color}[1]{}
\newcommand{\R}{\ensuremath{\mathbb R}}
\newcommand{\sm}{\backslash}
\newcommand{\sgl}[1]{\ensuremath{\{#1\}}}
\newcommand{\g}{\ensuremath{G\ }}
\newcommand{\G}{\ensuremath{G}}
\newcommand{\knl}{Kirchhoff's node law}
\newcommand{\kcl}{Kirchhoff's cycle law}
\newcommand{\are}{\vec{e}}
\newcommand{\arE}{\vec{E}}
\newcommand{\Tr}[1]{Theorem~\ref{#1}}
\newcommand{\Sr}[1]{Section~\ref{#1}}
\newcommand{\Prr}[1]{Pro\-position~\ref{#1}}
\newcommand{\Cr}[1]{Corollary~\ref{#1}}
\renewcommand{\iff}{if and only if}
\newcommand{\fe}{for every}
\newcommand{\Fe}{For every}
\newcommand{\st}{such that}
\newcommand{\wrt}{with respect to}
\newcommand{\rw}{random walk}
\newcommand{\labtequ}[2]{ \begin{equation} \label{#1} 	\begin{minipage}[c]{0.9\textwidth}  #2 \end{minipage} \ignorespacesafterend \end{equation} } 
\newcommand{\labtequc}[2]{ \begin{equation} \label{#1} 	\text{   #2 } \end{equation} }
\newcommand{\mymargin}[1]{
  \marginpar{%
    \begin{minipage}{\marginparwidth}\small%
      \begin{flushleft}%
        {\color{blue}#1}%
      \end{flushleft}%
   \end{minipage}%
  }%
}%
\newcommand{\mySection}[2]{}
\newcommand{\hit}[2]{H_{#1}(#2)}
\newcommand{\hv}{\hat{v}}
\newcommand{\hi}{\hat{i}}
\title{A Tractable Variant of Cover Time}
\author{Agelos Georgakopoulos\thanks{Supported by FWF Grant P-24028-N18.}
}
\begin{document}
\maketitle

\begin{abstract}
We introduce a variant of the cover time of a graph, called cover cost, in which the cost of a step is proportional to the number of yet uncovered vertices. It turns out that cover cost is more tractable than cover time; we provide an $O(n^4)$ algorithm for its computation, as well as some explicit formulae. The two values are not very far from each other, and so cover cost might be a useful tool in the study of cover time.  
\end{abstract}

\section{Introduction}
The cover time $CT_r(G)$ of a graph \g from a vertex $r$ is the expected number of steps it takes for random walk on \g starting at $r$ to visit all vertices. The  cover time of \g is defined as $CT(G):= \max_{r\in V(G)} CT_r(G)$. It has been extensively studied in various contexts;
applications include the construction of universal traversal sequences \cite{AKLLR, BrUni},
testing graph connectivity \cite{AKLLR, KaRaRan}, and protocol testing \cite{MiPaRan}. It has been studied by physicists interested in the fractal structure of the uncovered set of a finite grid; see \cite{DPRZ} for references and for an interesting relation between
the cover time of a finite grid and Brownian motion on Riemannian manifolds. 

Many bounds on cover time for specific classes of graphs have been obtained, see e.g.\ \cite{BW2,FeiTig,edgecov} and references therein. However, in general it is very hard to obtain exact formulae for cover time, or compute it algorithmically. A question of \cite{AldousFill} that remained open for many years despite several efforts \cite{MatCov,KKLV}, and was recently resolved in the affirmative using heavy probabilistic machinery \cite{DLP}, is whether there is a deterministic algorithm which approximates $CT(G)$ up to a constant factor in polynomial time.

\medskip
The following situation suggests a similar concept, which we will call the \defi{cover cost} of \G. Suppose that a truck starts at $r$ loaded with some goods to be equally distributed to the other vertices of the graph and performs random walk, leaving the goods corresponding to each vertex  the first time it gets there and carrying all remaining goods along. Rather than the expected number of steps it will take to finish this tour (which equals $CT_r(G)$), the truck driver is more interested in the expected total amount of goods that will have to be carried.

\begin{definition}
Consider a cover tour of random walk on a graph $G=(V,E)$  starting at vertex $r$. Define the cost of step $i$ to be $1 - \frac{k}{n}$ where $k$ is the (random) number of vertices visited so far excluding $r$, and $n:=|V|-1$. Define the \defi{cover cost} $cc_r(G)$ to be the expected total cost of all steps in the cover tour.
\end{definition}

As an example, let $S_n$ be a star with center $r$ and $n$ leaves. The problem of determining $CT_r(S_n)$ is the well known coupon collection problem, and one has $CT_r(S_n) \approx 2n\ln n$ \cite{AldousFill}. It is easy to check that $cc_r(S_n) \approx 2n$: having visited the first $k$ leaves, it will take an expected $\frac{2n}{n-k}$ steps to reach the next unvisited vertex, and almost each such step will cost $\frac{n-k}{n}$; the step from the newly reached leaf back to $r$ will be cheaper by $1/n$, and so $cc_r(S_n) = 2n-1$.
\medskip


For reasons that will become clear soon, we also define the (uppercase) \defi{Cover Cost} by $CC_r(G):= n cc_r(G)$ for $n=|V|-1$. Note that $cc_r(G)<CT_r(G)<CC_r(G)$. 
There is a further intuitive way of defining Cover Cost. Suppose now that instead of a  truck driver in the above game we have an electrician trying to visit all vertices of a graph to do some repair. Define his cost $ECC_r(G)$ to be the sum of the expected waiting times of his clients\footnote{This parameter was proposed by Peter Winkler (private communication).}. An easy double-counting argument implies that
$$ECC_r(G) = CC_r(G).$$
Note that, by linearity of expectation, $ECC_r(G) = \sum_{x\in V} \hit{r}{x}$, where $\hit{r}{x}$ is the \defi{hitting time} from $r$ to $x$, i.e.\ the expected time for \rw\ starting at $r$ to reach $x$. Thus
\labtequc{defh}{$CC_r(G)= \sum_{x\in V} \hit{r}{x}.$}
This formula yields a (deterministic) algorithm with running time $O(n^4)$ computing cover cost precisely. We describe this algorithm in \Sr{algo}. 

\medskip
In \Sr{secMain} we prove
\begin{theorem} \label{TCCpp}
For every graph $G=(V,E)$ and every $r\in V$, we have
$$CC_{r}(G)=  \sum_{x,y\in V} \frac{p_r(x<y)}{p_{xy}}$$
where $p_{xy}$ is the probability  that random walk from $x$ will visit $y$ before returning to $x$, and $p_r(x<y)$ is  the probability  that random walk from $r$ will visit $x$ before returning to $y$.
\end{theorem}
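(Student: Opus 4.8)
The plan is to start from the hitting-time identity $CC_r(G)=\sum_{x\in V}\hit{r}{x}$ recorded in \eqref{defh} and to evaluate each $\hit{r}{y}$ by bookkeeping \emph{where} the random walk spends its steps before reaching $y$. Fix a vertex $y$, let $(X_t)_{t\ge0}$ be the random walk started at $r$, and let $\tau_y=\min\{t\ge0:X_t=y\}$ be the first time it reaches $y$ (so $\tau_r=0$, and throughout I read $p_r(x<y)$ as the probability that the walk visits $x$ strictly before its first visit to $y$, the starting vertex $r$ counting as visited at time $0$; in particular $p_r(y<y)=0=p_r(x<r)$). Writing $\tau_y=\sum_{t=0}^{\tau_y-1}1=\sum_{x\in V}N_x$, where $N_x=|\{t:0\le t<\tau_y,\ X_t=x\}|$, linearity of expectation gives $\hit{r}{y}=\sum_{x\in V}\Ex[N_x]$; here $\Ex[N_y]=0$, and in general $\Ex[N_x]$ is the Green function at $(r,x)$ of the walk absorbed at $y$. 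Thus the whole theorem reduces to the single identity $\Ex[N_x]=p_r(x<y)/p_{xy}$ for all $x,y\in V$, which summed over $x$ and then over $y$ yields the claimed formula (the slice $y=r$ contributes $0$ to both sides since $\hit{r}{r}=0$ and $p_r(x<r)=0$).

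To prove $\Ex[N_x]=p_r(x<y)/p_{xy}$ I would apply the strong Markov property twice. On the event that the walk reaches $y$ without ever visiting $x$ — probability $1-p_r(x<y)$ — we have $N_x=0$. On the complementary event the walk first visits $x$ at the stopping time $\tau_x<\tau_y$, and by the strong Markov property the number of visits to $x$ in $[\tau_x,\tau_y)$ has the law of the analogous count for a walk started at $x$. For a walk started at $x$ this count is geometric: the walk is at $x$ at time $0$, and after each visit to $x$ the subsequent excursion independently returns to $x$ before reaching $y$ with probability $1-p_{xy}$ and reaches $y$ first with probability $p_{xy}$; hence the total number of visits to $x$ before $\tau_y$ is geometric with success probability $p_{xy}$ and mean $1/p_{xy}$. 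Multiplying the two factors gives $\Ex[N_x]=p_r(x<y)\cdot\tfrac{1}{p_{xy}}$, and the proof is complete after summing.

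The routine ingredients are finiteness (on a finite connected $G$ every $\tau_y$ has finite expectation and every $p_{xy}>0$, so all quantities are finite and the rearrangements are legitimate; if $G$ is disconnected both sides are $+\infty$) and the degenerate cases $x=y$ (both $N_y=0$ and $p_r(y<y)=0$), $x=r$ (where $p_r(r<y)=1$ because $r$ is visited at time $0$, and the geometric count correctly includes that initial visit), and $y=r$. The one step that genuinely needs care is the strong Markov argument — justifying both that the post-$\tau_x$ trajectory is an independent copy of the walk from $x$ and that the successive ``return to $x$ versus hit $y$'' trials really are i.i.d.\ with the stated probabilities — and this is where I expect to spend most of the effort, although it is standard once the stopping times are set up. It may be cleanest to phrase everything through the Green function $g_y(u,v):=\Ex_u[N_v]$ of the walk killed on hitting $y$, for which the two classical facts $g_y(r,x)=\Pr_r[\tau_x<\tau_y]\,g_y(x,x)$ and $g_y(x,x)=1/p_{xy}$ package exactly the computation above.
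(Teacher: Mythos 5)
Your proof is correct and is essentially the paper's own argument: the paper writes $cc_r(G)=\sum_x D(x)$ and evaluates each $D(x)=\frac1n\sum_y p_r(x<y)V_x^{\dagger y}$ with $V_x^{\dagger y}=1/p_{xy}$, which is precisely your identity $\Ex[N_x]=p_r(x<y)/p_{xy}$ for the expected number of visits to $x$ before hitting $y$, just summed in the opposite order. The only difference is presentational --- you enter through the hitting-time identity \eqref{defh} and the Green function of the walk killed at $y$, while the paper uses a charge/quark bookkeeping --- but the two key probabilistic facts (the strong Markov factorisation and the geometric return count) are the same.
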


In the case where \g is a tree, \Tr{TCCpp} yields a simpler expression for cover cost involving no probabilistic parameters (see \Cr{tree}), and it implies a surprising connection to the Wiener index proved in \cite{CTree}; see \Sr{secMain}.

\medskip
In the case where \g is the path $P_n$ of $n$ edges we show that cover cost has the same order of magnitude $O(n^2)$ as cover time. However, the two parameters behave differently when $n$ is fixed and we vary the starting point of our cover tour on $P_n$. Both $cc_r(P_n)$ and $CT_r(P_n)$ are minimised when $r$ is an endpoint and maximised when $r$ is a midpoint, but the difference between the two extremes behaves very differently: for $CT_r(P_n)$ this difference is also $O(n^2)$, about $CT_r(P_n)/4$, while for $cc_r(P_n)$ the difference is $O(n)$.

\begin{theorem}\label{main}
For a path $P_n$ with an even number of edges $n$, cover cost $cc_r(P_n)$ is minimised when $r$ is an endpoint and maximised when $r$ is the midpoint. Moreover, $cc_{\frac{n}{2}}(P_n)- cc_0(P_n)= n/4$.
\end{theorem}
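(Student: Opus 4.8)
The plan is to reduce everything to hitting times via \eqref{defh}, which gives $CC_r(P_n)=\sum_{x\in V}\hit{r}{x}$ and hence $cc_r(P_n)=CC_r(P_n)/n$. Label the vertices of $P_n$ by $0,1,\dots,n$. The first ingredient is the explicit value of the hitting times on a finite path: for $a<b$ one has $\hit{a}{b}=b^2-a^2$. Indeed, a walk from $a$ cannot reach a vertex exceeding $b$ before hitting $b$, so $\hit{a}{b}$ equals the expected time to hit $b$ for the walk on $\{0,\dots,b\}$ that is reflected at $0$; the function $h_i:=b^2-i^2$ satisfies $h_b=0$, $h_0=1+h_1$, and $h_i=1+\tfrac{1}{2}(h_{i-1}+h_{i+1})$ for $0<i<b$, which determines it uniquely. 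Applying the reflection $i\mapsto n-i$ gives $\hit{a}{b}=(n-b)^2-(n-a)^2$ for $a>b$.

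The heart of the argument is to compute the increment $\Delta(r):=CC_{r+1}(P_n)-CC_r(P_n)$ for $0\le r<n$ by comparing the sums $\sum_x \hit{r+1}{x}$ and $\sum_x \hit{r}{x}$ term by term. For $x\le r$ one gets $\hit{r+1}{x}-\hit{r}{x}=(n-r)^2-(n-r-1)^2=2n-2r-1$, which also covers $x=r$ since $\hit{r}{r}=0=(n-r)^2-(n-r)^2$; and for $x\ge r+1$ one gets $\hit{r+1}{x}-\hit{r}{x}=r^2-(r+1)^2=-(2r+1)$, which also covers $x=r+1$ since $\hit{r+1}{r+1}=0$. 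As there are $r+1$ indices of the first type and $n-r$ of the second,
$$\Delta(r)=(r+1)(2n-2r-1)-(n-r)(2r+1)=n-2r-1.$$

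Both claims now follow. Since $n$ is even, $\Delta(r)=n-2r-1>0$ precisely for $r\le n/2-1$, so the finite sequence $CC_0(P_n)<CC_1(P_n)<\dots<CC_{n/2}(P_n)$ is strictly increasing; together with the reflection symmetry $CC_r(P_n)=CC_{n-r}(P_n)$ this shows that over all choices of $r$ the Cover Cost — hence also $cc_r(P_n)=CC_r(P_n)/n$ — is minimised exactly at the endpoints $r\in\{0,n\}$ and maximised exactly at the midpoint $r=n/2$. Finally, telescoping the increments,
$$CC_{n/2}(P_n)-CC_0(P_n)=\sum_{r=0}^{n/2-1}(n-2r-1)=\left(\tfrac{n}{2}\right)^{2}=\frac{n^{2}}{4},$$
and dividing by $n$ yields $cc_{n/2}(P_n)-cc_0(P_n)=n/4$.

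I do not anticipate a genuine obstacle; the only delicate points are getting the finite-path hitting times right (the endpoints act as reflecting, not absorbing, barriers) and checking that the boundary indices $x=r$ and $x=r+1$ fit the generic term-by-term differences when computing $\Delta(r)$.
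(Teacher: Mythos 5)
Your proof is correct and follows essentially the same route as the paper: both arguments reduce the claim via \eqref{defh} to the sum of hitting times $\sum_{x} \hit{r}{x}$ on the path and rest on the same explicit values $\hit{r}{k}=k^2-r^2$ for $k>r$ and $(n-k)^2-(n-r)^2$ for $k<r$. The only (minor) divergence is in the final bookkeeping: the paper evaluates the sum in closed form to get $cc_r(P_n)=\frac{(n+1)(2n+1)}{6}+\frac{r(n-r)}{n}$ (Proposition~\ref{pr}), from which the theorem is read off, whereas you compute the first difference $\Delta(r)=n-2r-1$ and telescope, which yields the monotonicity and the gap $n/4$ with slightly less computation but without the exact formula.
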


We prove this in \Sr{pths}; in fact, we will give an exact formula for $cc_r(P_n)$ for every starting point $r$.

\medskip
In \Sr{prob} we propose some problems on cover cost and its relation to cover time.

\bigskip
Throughout this paper, a {\em random walk} begins at some vertex $r$ of a finite graph \G, and when at vertex $x$, it chooses one of the neighbours of $x$ at random according to the uniform distribution, and moves to that neighbour.

\section{Paths}\label{pths}

In this section we obtain an explicit formula for the cover cost of a path, from which \Tr{main} immediately follows:

\begin{proposition}\label{pr}
Let $P_n$ be the path on $n+1$ vertices, indexed by $0,1,\ldots n$. Then $cc_r(P_n)=\frac{(n+1)(2n+1)}{6} + \frac{r(n-r)}{n}$.
\end{proposition}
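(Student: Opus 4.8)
The plan is to apply the identity~(\ref{defh}), which gives $CC_r(P_n)=\sum_{x\in V}\hit{r}{x}$ and hence $cc_r(P_n)=\frac1n\sum_{x=0}^{n}\hit{r}{x}$, and then to evaluate the hitting times on the path by an elementary first-step analysis.

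First I would compute the one-step hitting times. Writing $h_i:=\hit{i}{i+1}$, the walk leaves the endpoint $0$ deterministically so $h_0=1$, while for $1\le i\le n-1$ conditioning on the first step yields $h_i=1+\tfrac12\hit{i-1}{i+1}=1+\tfrac12(h_{i-1}+h_i)$, whence $h_i=h_{i-1}+2$ and therefore $h_i=2i+1$; by the reflection symmetry $x\mapsto n-x$ of $P_n$ one gets $\hit{i+1}{i}=2(n-i)+1$ as well. Since to reach $x$ from $r$ the walk must pass through every vertex between them, hitting times add along the path, so with $\sum_{i=0}^{m-1}(2i+1)=m^2$ this gives $\hit{r}{x}=x^2-r^2$ for $x\ge r$ and $\hit{r}{x}=(n-x)^2-(n-r)^2$ for $x\le r$.

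It then remains to sum over $x\in\{0,\dots,n\}$. Writing $S(m):=\sum_{k=0}^{m}k^2=\tfrac{m(m+1)(2m+1)}{6}$, splitting the sum at $x=r$ gives $CC_r(P_n)=2S(n)-S(r)-S(n-r)-n\,r(n-r)$, where the last term collects the $-r^2$ and $-(n-r)^2$ corrections. The cleanest way to finish is to avoid evaluating $S(r)+S(n-r)$ altogether: from the formulae above, $CC_0(P_n)=\sum_{x=0}^{n}x^2=S(n)$, and a short computation shows $CC_r(P_n)-CC_{r-1}(P_n)=n+1-2r$, so $CC_r(P_n)=S(n)+\sum_{k=1}^{r}(n+1-2k)=S(n)+r(n-r)$; dividing by $n$ and using $S(n)/n=\tfrac{(n+1)(2n+1)}{6}$ gives the claim. (Alternatively one can simplify $2S(n)-S(r)-S(n-r)$ directly via the identity $S(r)+S(n-r)=S(n)-(n+1)r(n-r)$, obtained by expressing $r^3+(n-r)^3$ and $r^2+(n-r)^2$ through $n$ and the product $r(n-r)$, together with $6S(n)=n(n+1)(2n+1)$.)

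The probabilistic content is entirely in the one-line recursion for $h_i$, so I expect the only real obstacle to be keeping the bookkeeping straight when summing over $x$ and simplifying; a check on $P_2$ (where $CC_0=5$ and $CC_1=6$) confirms the formula $CC_r(P_n)=S(n)+r(n-r)$.
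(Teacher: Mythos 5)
Your proof is correct and follows essentially the same route as the paper: both apply the identity $CC_r(P_n)=\sum_x H_r(x)$ from \eqref{defh}, establish $H_r(x)=x^2-r^2$ for $x\ge r$ and $H_r(x)=(n-x)^2-(n-r)^2$ for $x\le r$, and then sum. The only differences are cosmetic — you derive the hitting times from the one-step recursion rather than quoting $H_0(k)=k^2$ and additivity, and you organize the final summation via the telescoping difference $CC_r-CC_{r-1}=n+1-2r$ instead of summing directly.
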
 
\begin{proof} 	
We are going to consider the sum of the hitting times $\hit{r}{k}$ from $r$ and apply \eqref{defh}. For $r=0$ we have the well known formula $\hit{0}{k}=k^2$. For $k>r>0$, we have $ \hit{0}{k} = \hit{0}{r} + \hit{r}{k}$. Combining these two formulas we get $\hit{r}{k}= k^2-r^2$ for $k>r$, and similarly we get $\hit{r}{k}= (n-k)^2-(n-r)^2 = (r-k)(2n-(k+r))$ for $k<r$.

Thus we have
\begin{eqnarray*}
\sum_{k} \hit{r}{k} =& \sum_{k>r} (k^2-r^2) + & \sum_{k<r} (r-k)(2n-(k+r))\\
=&\sum_{k>r} (k^2-r^2) + & \sum_{k<r} (k-r)(k+r) + \sum_{k<r} 2n(r-k)\\
=&\sum_{k\neq r} (k^2-r^2) + & \sum_{k<r} 2n(r-k),\\
\end{eqnarray*}
and using the formulae for the sum of the first $r$ squares and the first $r$ natural numbers we can rewrite this as 
\begin{eqnarray*}
\sum_{k} \hit{r}{k} =&\frac{n(n+1)(2n+1)}{6}- r^2 & -nr^2 + 2n\frac{(r+1)r}{2}\\
=&\frac{n(n+1)(2n+1)}{6} + & r(n-r).
\end{eqnarray*}

Plugging this into \eqref{defh} we obtain the desired formula.
\end{proof}

\section{General Formulae for Cover Cost} \label{secMain}

Let $G=(V,E)$ be a graph on $n+1$ vertices, and fix $r\in V$. We can express $CC_{r}$ as the sum of the contribution of each $x\in V$ to $CC_{r}$ as follows. Start a \rw\ particle of `charge' 1 at $r$, and each time the particle visits a vertex in $V\sm \sgl{r}$ for the first time, reduce its charge by $1/n$, letting it continue its \rw\ with the remaining charge as long as this charge is non-zero. Note that the particle is stopped upon completing a cover tour. \Fe\ $x\in V$, let $D(x)$ denote the expected total amount of charge departing from $x$ in this \rw. By the definitions, we have 
\labtequc{ccD}{$\frac{CC_{r}(G)}{n}= cc_{r}(G)= \sum_{x\in V} D(x)$.}
Next, we are going to split each $D(x)$ into contributions of each $y\in V$ as follows. Let $V_{x}^{\dagger y}$ denote the expected number of times that random walk from $x$ will visit $x$ before visiting $y$ for the first time; we also count the starting step as a visit to $x$, and so $V_{x}^{\dagger y}>1$ \fe\ $y\neq x$. Let $p_r(x<y)$ denote the probability for random walk from $r$ to visit $x$ before $y$.

We claim that 
\labtequc{Vrx}{$D(x)= \frac1{n}\sum_{y\in V \sm \{x\} } p_r(x<y)V_{x}^{\dagger y}$.}
To see this, think of the initial charge of the particle as having been divided into $n$ `quarks' of charge $1/n$ before beginning the tour, each quark labelled by a distinct vertex at which it is meant to be left. This allows as to write $D(x)$ as the sum of the expected contributions of each quark to $D$. Linearity of expectation now implies the above formula.

Using the formula for the expected number of repetitions of a Bernoulli trial until the first success, we see that $V_{x}^{\dagger y}=1/p_{xy}$ where $p_{xy}$ is the probability  that random walk from $x$ will visit $y$ before returning to $x$. Combining this to the above formulas we get
\labtequc{ccpp}{$CC_{r}(G)= \sum_{x,y\in V} \frac{p_r(x<y)}{p_{xy}}$.}
This proves \Tr{TCCpp}. 

\bigskip

In the case where $G$ is a tree, the probabilistic parameters in \eqref{ccpp} can be replaced by explicit graph-theoretic ones, yielding a pleasant formula for the cover cost. Given vertices $r,x,y$ on a tree, let $x \wedge_r y$ denote the \defi{confluent} of $x,y$ \wrt\ $r$, i.e.\ the vertex of minimal distance from $r$ separating $x$ from $y$. Recall that {$d(x)$} is the degree of $x$. We have
\begin{corollary}[\cite{CTree}]\label{tree}
Let $T$ be a tree and $r\in V(T)$. Then 
$$CC_{r}(T)=  \sum_{x,y\in V(T)} d(x \wedge_r y, y) d(x).$$
\end{corollary}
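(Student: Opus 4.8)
The plan is to derive \Cr{tree} from \eqref{ccpp} by computing each of the two probabilistic quantities $p_r(x<y)$ and $p_{xy}$ explicitly in the tree case. Both quantities reduce, via the Markov property and the tree structure, to one-dimensional gambler's-ruin computations along the unique paths joining the relevant vertices.

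First I would handle $p_{xy}$, the probability that random walk from $x$ reaches $y$ before returning to $x$. Let $z$ be the neighbour of $x$ on the $x$--$y$ path in $T$. With probability $1/d(x)$ the walk steps to $z$; from $z$, the walk must reach $y$ before $x$, and since $T$ is a tree this is a gambler's-ruin problem on the path from $x$ to $y$: the probability of reaching the far end $y$ (at distance $d(x,y)$) before the near end $x$, started one step in at $z$, is $1/d(x,y)$. (Here one uses that along the path all internal degrees contribute symmetrically, so the effective resistance / harmonic calculation is that of a simple path.) From any neighbour of $x$ not on the $x$--$y$ path, the walk must return to $x$ before it can ever reach $y$. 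Hence $p_{xy} = \frac{1}{d(x)}\cdot\frac{1}{d(x,y)}$, so that $V_x^{\dagger y} = 1/p_{xy} = d(x)\,d(x,y)$.

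Next I would compute $p_r(x<y)$, the probability that random walk from $r$ visits $x$ before $y$. Let $c := x\wedge_r y$ be the confluent. Walk from $r$ must first reach $c$ (it cannot reach $x$ or $y$ without passing through $c$, by definition of the confluent as the separating vertex closest to $r$); this happens with probability $1$ since the graph is finite and connected. Conditioned on being at $c$, the events ``reach $x$ before $y$'' and ``reach $y$ before $x$'' are again governed by gambler's ruin on the path from $x$ through $c$ to $y$: starting at $c$, the probability of hitting $x$ (at distance $d(c,x)$ on one side) before $y$ (at distance $d(c,y)$ on the other side) is $\frac{d(c,y)}{d(c,x)+d(c,y)} = \frac{d(c,y)}{d(x,y)}$. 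Therefore $p_r(x<y) = \frac{d(x\wedge_r y,\, y)}{d(x,y)}$.

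Finally I would substitute both expressions into \eqref{ccpp}:
$$CC_r(T) = \sum_{x,y\in V(T)} \frac{p_r(x<y)}{p_{xy}} = \sum_{x,y} \frac{d(x\wedge_r y, y)}{d(x,y)}\cdot d(x)\,d(x,y) = \sum_{x,y\in V(T)} d(x\wedge_r y, y)\, d(x),$$
which is the claimed formula (the diagonal terms $x=y$ contribute $0$ since $d(x,x)=0$, consistent with $p_r(x<x)=0$). The main obstacle is the careful justification of the two gambler's-ruin reductions — in particular, verifying that the branching structure of the tree off the relevant path genuinely does not affect the hitting probabilities (every excursion into a side branch returns to the path before anything is hit), and pinning down the conditioning in $p_r(x<y)$ through the confluent. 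These are standard but need a clean statement; everything after that is a one-line substitution.
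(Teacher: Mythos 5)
Your proof is correct and follows essentially the same route as the paper's (the paper defers the proof to \cite{CTree}, but its argument is exactly this one): show $p_{xy}=1/\bigl(d(x)\,d(x,y)\bigr)$ and $p_r(x<y)=d(x\wedge_r y,\,y)/d(x,y)$ by reducing to gambler's ruin on the relevant path, noting that side branches of the tree do not affect these hitting probabilities, and then substitute into \eqref{ccpp}. Nothing further is needed.
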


This is proved in \cite{CTree}, where it is further used to obtain an interesting connection between $CC_r(T)$ and the Wiener index $W(T)$ of $T$: it is proved that, \fe\ tree $T$,
$$\sum_{v\in V(T)} \left( H_{rv} + d(r,v) \right) = CC_r(T)+ \sum_{v\in V(T)} d(r,v) = 2W(T) := \sum_{x,y\in V(T)} d(x,y).$$

\comment{
\begin{proof} 

We claim that $p_{xy}=\frac1{d(x) d(x,y)}$ on a tree, and so our assertion follows easily from \eqref{ccpp}. To begin with, notice that $p_{xy}$ only depends on the degree of $x$ and the length $d(x,y)$ of the $x$-$y$~path, since any ramifications on that path do not affect $p_{xy}$. Moreover, \Prr{prh} easily implies
\labtequ{a}{Let $P_n$ be the path on $n+1$ vertices, indexed by $0,1,\ldots n$. Then the probability $p_r(0<n)$ for random walk from $r$ to visit vertex $0$ before vertex $n$ is $\frac{n-r}{n}$. Moreover, $p_{0n}=1/n$.}
Thus $p_{xy} = 1/d(x) d(x,y)$ in our case. Substituting in \eqref{ccpp} we obtain
$$CC_r(T)=  \sum_{x,y\in V} {p_r(x<y)}{d(x) d(x,y)}$$
where $d(x)$ denotes the degree of $x$ and $d(x,y)$ the usual edge-counting distance.

The values $p_r(x<y)$ in this formula can also be expressed in terms of distances by the same arguments: we have $p_r(x<y)= d(x \wedge_r y, y)/ d(x,y)$. Substituting these values above we obtain the desired  formula.
\end{proof} 
}


\section{Algorithms} \label{algo}

Formula \eqref{defh} allows for an efficient computation of the exact value of cover time: fixing $x\in V$, we can write
\labtequ{H}{$H_y(x) = 1+ \frac{\sum_{\{z\mid yz\in E\}} H_z(x)}{d(y)}$}
\fe\ $y\neq x\in V$, since the first step of \rw\ from $y$ takes us to some neighbour of $y$. This, and the fact that $H_x(x)=0$, yields a system of $n$ linear equations with $n$ unknowns, which can be solved  in time $O(n^3)$, e.g.\ by Gaussian elimination. In order to obtain $CC_r(G)$ it suffices, by \eqref{defh}, to solve $n$ such systems, one for each $x\neq r$, and add up the values $H_r(x)$. Thus $CC_r(G)$ can be computed in time $O(n^4)$; in fact, with almost no additional effort we compute simultaneously the Cover Cost \fe\ starting vertex $r$.\footnote{I was made aware of this algorithm by Erol Pekoz.}

A further algorithm for the computation of $CC_r(G)$ can be derived from the proof of \Tr{TCCpp} as follows. For  the quantity $D(r)$ of \eqref{ccD} we have, by \eqref{Vrx} and the discussion following it, $D(r)= \sum_{y\in V}  1/p_{ry}$. Now each value $p_{ry}$ can be computed by solving a linear system of $n$ equations similar to \eqref{H}: we have
$p_{ry}= \sum_{\{z\mid rz\in E\}} p_z(y<r)$ and
$p_z(y<r)= \sum_{\{z'\mid zz'\in E\}} p_{z'}(y<r)$ \fe\ $z\neq r,y$, while $p_y(y<r)=1$.
Once $D(r)$ has been computed, all other values $D(x), x\in V\sm \sgl{r}$ can be obtained simultaneously by solving a single linear system of $n$ equations: by the definition of $D(x)$ we have
$$D(x)= \sum_{\{z\mid xz\in E\}} \frac{D(z)}{d(z)}.$$
Thus this method yields a further $O(n^4)$ algorithm for the computation of $CC_r(G)$.

\comment{
  \section{Electrical networks and the algorithm} \label{elec}

  Each of the probabilistic parameters appearing in \eqref{ccpp} can be computed by solving an electrical network problem and using the well-known theory on the relation between random walk and electrical networks:
\begin{proposition}[\cite{DoyleSnell,LyonsBook,ceff}] \label{prh}
The probability $p_r(x<y)$ equals the potential of vertex $r$ when a voltage source imposes potential 1 at $x$ and $0$ at $y$.
\end{proposition}
In other words, $p_r(x<y)$ equals $h(r)$ for the unique harmonic function $h: V\to \R$ with boundary conditions $h(x)=1, h(y)=0$. Similarly,  $p_{xy}$ equals the intensity of the electrical current from $x$ to $y$ when voltage $1/d(x)$ is imposed at $x$ and voltage $0$ is imposed at $y$; see \cite{ceff} for details. 

An electrical network problem as above can be formulated as a linear system of $n$  equations with $n-2$ unknowns. Since such a system can be solved in time $O(n^3)$, e.g.\ by Gaussian elimination, \eqref{ccpp} implies that there is an algorithm with running time $O(n^5)$ that computes $CC_r(G)$ precisely because there are $2 {n^2}$ parameters to be computed.

In fact, using a bit more electrical network theory, we can reduce the running time of our algorithm to $O(n^4)$. For this, observe that by \eqref{Vrx} and the discussion after it we have $D(r) = \sum_{y\neq r} 1/p_{ry}$, because $p_r(r<y)=1$. Thus we only need to solve $n$ electrical network problems to obtain $D(r)$. Once $D(r)$ is known, we can obtain all other values $D(x)$ by  solving a single electrical network theorem on our graph $G$. Before describing this in detail, let us recall some electrical network basics.

A \defi{network} for the purposes of this paper is a pair $N=(G,B)$, where $G=(V,E)$ is a graph and $B$ is a set of vertices of $G$, the \defi{external nodes}. There are two main types of network problems one can consider: with `voltage' or with `current' boundary conditions. The problems mentioned above were of the first type; that is, one prescribes a boundary function $\hv: B\to \R$ and seeks to extend it to the rest of the vertices so that it is harmonic on $V\sm B$.

In the other type of problem, we have prescribed `current' boundary conditions
$\hat{\imath}(b), b\in B$, and the problem consists in finding an antisymmetric function $i: \arE \to \R$, called the \defi{electrical current}, satisfying \knl\  at every vertex in $V\sm B$, \kcl, as well as the boundary conditions. Here, \defi{$\arE$} denotes the set of ordered pairs of vertices of \g that are joined by an edge. \defi{\knl} is the following condition of preservation of current: 
\labtequc{k1}{$\sum_{\{y\mid xy\in E\}}  i(xy) = 0$ for $x\in V\sm B$,}
where $i(xy)$ is shorthand for $i((x,y))$.
\defi{\kcl} demands that \fe\ directed cycle $\vec{C}$ we have $\sum_{\are\in E(\vec{C})} i(\are) = 0$
We say that $i$ satisfies  the boundary conditions if, \fe\ $b\in B$, we have $\hat{\imath}(b) = \sum_{bx\in E} i(bx)$. It is well known that this problem has a solution \iff\ $\sum \hat{\imath}(b) = 0$, and the solution is then unique. 

In our case, we will impose the boundary conditions $\hi(r)=1$ and $\hi(x)=-1/n$ for every $x\neq r$; thus $V=B$ in our case. This network problem can be solved as follows. \Fe\ vertex $x$ of $G$, let $v(x):= D(x)/d(x)$, where $D(x)$ is as in \eqref{ccD} and \defi{$d(x)$} is the degree of $x$, i.e.\ the number of edges incident with $x$. Then let $i(xy)= v(x) - v(y)$ \fe\ directed edge $(x,y)$. Notice that, by the definition of $D$, $i(xy)$ equals the expected net amount of charge going through the directed edge $(x,y)$.  

We claim that $i$ solves our network problem. This is in fact proved in \cite{ceff} in greater generality, so we will only sketch a proof here. To see that $i$ satisfies \kcl, think of $v(x)$ as the potential of vertex $x$, notice that $i(xy)$ was defined as a potential deference, and note that the sum of potential differences along any cycle is 0. We do not have to check the satisfaction of \knl\ as $V\sm B$ is empty in our case. The fact that $i$ satisfies the boundary conditions follows immediately from the definition of $D$.

\medskip
Next, we notice that given the solution $i$ to the above network problem, and the value $D(r)$, we can easily calculate the other values $D(x)$ using $i$: let $P_x = x_0(=r) x_1 \ldots x_k(=x)$ be any path from $r$ to $x$ in our graph. Then $v(r) - v(x) = \sum_{j<k} (v(x_j)-v(x_{j+1})) = \sum_{j<k} i(x_j x_{j+1})$, and by the definition of $v()$ we obtain $D(x) = d(x)\left(D(r)/d(r) - \sum_{j<k} i(x_j x_{j+1}) \right)$ (the fact that this expression does not depend on the choice of $P_x$ is a consequence of \kcl).

All in all, we had to solve $|V|-1$ network problems to obtain the value $D(r)$ as above, plus one network problem to obtain the solution $i$ that allowed us to calculate all other $D(x)$ quickly. We can thus compute $CC_r(G)$ in time $O(n^4)$.


\comment{
\Cr{tree} motivates the following
\begin{problem} \label{prtr}
Which rooted tree on $n$ vertices minimises $CC_r(G)$? Which maximises it? 
\end{problem}

\section{}
Note however that when solving an electrical network problem as above, only the current admits a unique solution while voltages can be shifted by any constant. Thus, in order to be able to determine cover cost using the voltage formula \eqref{ccv} and the solution to the electrical network problem, we must know at least one value of $v$, for example $v(x_0)$, where $v$ is defined as above. It turns out that in the case where $G$ is a tree, there is indeed a nice formula for $v(x_0)$:

\begin{theorem}\label{tree}
Let $T$ be a tree and $r\in V(T)$. Then $v(r)= \frac1{n}\sum_{x\in V(T)} d(r,x)$, where $v$ is as in \eqref{ccv} and $d$ is the usual edge-counting distance.
\end{theorem}
\begin{proof} 
Let $V_{rx}$ denote the expected number of times that random walk from $r$ will visit $r$ before visiting $x$ for the first time; we also count the starting step as a visit to $r$, and so $V_{rx}>1$.

.......
\end{proof}

The value $v(y)$ at any other vertex $y$ can be expressed by a similar formula, except that the contribution of a vextex $x$ to $D(y)$ in the sum \eqref{Vrx} has to be multiplied by the probability for random walk from $r$ to visit $y$ before $x$. However, it might be more convenient, once $v(r)$ is known, to calculate $v(y)$ by subtracting from $v(r)$ the potential drop induced by the flow $i$ along that $r$-$y$~path in the above electrical network; this potential drop is easy to calculate.

\medskip
A formula like \eqref{ccv} for cover time rather than cover cost can be obtained by modifying the electrical network problem a bit: instead of $\hat{\imath}(x_i)= -1/n$ let now $\hat{\imath}(x_i)= -p_i$ where $p_i$ is the probability that a cover tour from $x_0$ will finish at $x_i$. To solve the problem, start a random walker at $x_0$, and kill it when it has visited all vertices. The same arguments show that if we define the  functions $v, j$ as before, then $i$ solves the network problem while $\sum_{x\in V} d(x)v(x) = CT_{x_0}$. However, it is now harder to calculate the $v(x)$. 


} 
} 

\section{Further Problems} \label{prob}

The examples in the introduction show that cover cost might, depending on the graph, have the same order of magnitude as cover time or be quite smaller. How does cover cost behave in the extremal cases of `fast graphs', i.e.\ when $CT= O(n\ln n)$, and `slow graphs', i.e.\ when $CT(G)=  O(n^3)$? (It is known that $n\ln n \lesssim CT(G) \lesssim 4n^3/27$ for every graph $G$ on $n$ vertices \cite{FeiTig}).

\begin{problem}
Is it true that \fe\ $M\in \R^+$ there is $c(M)$ \st\ for every graph \G\ and $r\in V(G)$, if $CT_r(G)<M n\ln n$ then $cc_r(G)<c(M) n$, where $n=|V(G)|-1$?\footnote{This problem arrised after a discussion with Itai Benjamini.}
\end{problem}

\begin{problem}
Is it true that \fe\ $M\in \R^+$ there is $c(M)$ \st\ for every graph \G\ and $r\in V(G)$, if $CT_r(G)> M n^3$ then $cc_r(G)> c(M) n^3$?
\end{problem}

The  extremal graphs for cover time are not known, although a lot of work has been done and graphs that are close to being extremal are known \cite{FeiCol}. It would be interesting to find the extremal graphs for cover cost:
\begin{problem}
Which rooted graph on $n$ vertices minimises $cc_r(G)$? Which maximises it? 
\end{problem}

\medskip
It would be very interesting to obtain bounds on $cc_r(G)/CT_r(G)$, for this would allow us to use cover cost in order to obtain bounds on cover time. The following two problems are motivated by this.

\begin{conjecture}
The path on $n$ vertices rooted at an endpoint maximises $cc_r(G)/CT_r(G)$ over all rooted graphs \g on $n$ vertices. 
\end{conjecture}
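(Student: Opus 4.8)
The plan is to reduce everything to hitting times. By \eqref{defh}, $cc_r(G)=\frac1n\sum_{x\in V}\hit{r}{x}$ and $CC_r(G)=n\,cc_r(G)$, while for the path $P_n$ on $n+1$ vertices rooted at an endpoint $0$ we have $CT_0(P_n)=\hit{0}{n}=n^2$ (a cover tour of $P_n$ from $0$ finishes exactly when it reaches $n$), so by \Prr{pr} the conjectured extremal value is $\frac{cc_0(P_n)}{CT_0(P_n)}=\frac{(n+1)(2n+1)}{6n^2}=\frac13+\frac1{2n}+\frac1{6n^2}$, decreasing to $\frac13$. Thus the conjecture is the inequality $\sum_{x\in V}\hit{r}{x}\le\frac{(n+1)(2n+1)}{6n}\,CT_r(G)$ for every rooted graph on $n+1$ vertices, or asymptotically $cc_r(G)\le(\tfrac13+o(1))\,CT_r(G)$.

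First I would try the obvious bound $cc_r(G)\le h:=\max_x\hit{r}{x}$ together with $CT_r(G)\ge h$; this only yields the trivial $cc_r(G)\le CT_r(G)$. The natural refinement is a dichotomy on the ratio $CT_r(G)/h$: if $CT_r(G)\ge 3h$ then $cc_r(G)\le h\le\tfrac13CT_r(G)$, so the real work is the regime $h\le CT_r(G)<3h$ — intuitively the ``path-like'' graphs, where cover time is within a constant of the largest hitting time. There one would want to show the hitting times from $r$ are so spread out that $\frac1n\sum_x\hit{r}{x}\le(\tfrac13+o(1))h$. Writing $S_\theta:=\{x:\hit{r}{x}>\theta h\}$ and using $\sum_x\hit{r}{x}=h\int_0^1|S_\theta|\,d\theta$, this is a statement about the decay of $|S_\theta|$ in $\theta$: for $P_n$, $\hit{0}{k}=k^2>\theta n^2$ iff $k>\sqrt\theta\,n$, so $|S_\theta|\approx n(1-\sqrt\theta)$ and $\int_0^1 n(1-\sqrt\theta)\,d\theta=\tfrac13 n$; the hope is that a Matthews-type lower bound on the time to cover $S_\theta$ forces every graph in this regime to have $|S_\theta|$ decay at least this fast.

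The decisive obstacle is that this hope is false — and in fact so is the conjecture. Take the lollipop $L$: a clique $K_c$ together with a pendant path carrying $p$ further vertices, rooted at the degree-one end $r$ of the path. A cover tour from $r$ must first reach the clique (expected time $\approx p^2$) and then cover the clique (a further $\approx c\ln c$, plus negligible path-excursions), so $CT_r(L)\approx p^2$ once $p^2\gg c\ln c$; but each of the $\approx c$ clique vertices has hitting time $\approx p^2$ from $r$, so $cc_r(L)=\frac1{c+p-1}\sum_x\hit{r}{x}\ge(1-o(1))\frac{c}{c+p}\,p^2$. Choosing $p\sim N/\ln N$ and $c\sim N$ (where $N=|V|$) gives $cc_r(L)/CT_r(L)\to 1$. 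Hence $\sup_{G,r}cc_r(G)/CT_r(G)=1$ (approached, never attained, since $cc_r(G)<CT_r(G)$), the path is nowhere near extremal, and even within trees the same mechanism lets a ``broom'' (a path with many leaves at its far end, rooted at the other end) beat the path.

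Accordingly I would not try to prove the statement as written but rather to (i) determine the true $N$-vertex maximiser of $cc_r(G)/CT_r(G)$ — the computation above points to a lollipop with an optimally tuned path length $p=p(N)$ — and (ii) prove the sharp asymptotic $cc_r(G)\le(1-c_0/\ln N+o(1/\ln N))\,CT_r(G)$ with the correct constant $c_0$. The method for (ii) would still be the $S_\theta$/cover-time analysis above, now squeezing out the second-order term; the main technical difficulty is that the Matthews lower bound is far from tight (for the $n$-cycle it underestimates $CT$ by a polynomial factor), so one needs the sharper hitting- and cover-time estimates available precisely for graphs whose vertex set concentrates in a fast-to-cover, low-diameter lump hanging off a long thin path.
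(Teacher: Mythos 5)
This statement is posed in the paper as an open conjecture --- there is no proof of it in the paper to compare yours against. What you have written is not a proof but an argument that the conjecture is \emph{false}, and I believe that argument is essentially correct. Your reduction is exactly the paper's identity \eqref{defh}: $cc_r(G)=\frac1n\sum_{x\in V}H_r(x)$, so by Proposition~\ref{pr} the conjectured maximum is $\frac{(n+1)(2n+1)}{6n^2}\to\frac13$, and the question is whether any rooted graph beats $\frac13+o(1)$. Your lollipop does. With a handle of $p$ edges rooted at its free end $r$ and a clique $K_c$ attached at the other end $a$: since $a$ is a cutvertex, $H_r(a)=p^2$ exactly; every clique vertex $v$ has $H_r(v)=p^2+H_a(v)=p^2+O(c+p)$, so $cc_r\ge\frac{c-1}{c+p-1}\,p^2\,(1-o(1))$; and the cover tour has already covered the whole handle when it first reaches $a$, after which covering $K_c$ costs $(1+o(1))\,c\ln c$ plus path excursions from $a$ of total expected length $O\!\left(\frac{\ln c}{c}\,p\right)=O(1)$ for your parameters. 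Hence $CT_r=p^2+O(c\ln c)$, and with $p\sim N/\ln N$, $c\sim N$ both correction terms are $o(p^2)$, so $cc_r/CT_r\to1$. Combined with the general bound $cc_r\le\max_x H_r(x)\le CT_r$, this gives $\sup_{G,r} cc_r(G)/CT_r(G)=1$, refuting the conjecture for all large $n$; your broom variant shows the path is not even extremal among trees.

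Two points to tighten before this is a complete disproof. First, the clique-side estimates are stated heuristically and need to be written out: that $H_a(v)=O(c+p)$ in the presence of the pendant path (e.g.\ via $H_a(v)+H_v(a)=2|E|\,R_{\mathrm{eff}}(a,v)$), that the expected length of an excursion into the handle is $2p-1$, and that the expected number of such excursions before $K_c$ is covered is $O(\ln c/c)$ times the number of visits to $a$ --- all standard, none difficult, but currently asserted rather than proved. Second, the opening $S_\theta$/Matthews discussion is a dead end for your final argument (as you yourself note, the dichotomy it suggests fails exactly on the lollipop) and should be cut; and your proposed programme of determining the true maximiser and the second-order asymptotics $1-\Theta(1/\ln N)$ is a reasonable replacement question but is not needed to settle the statement as posed.
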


\begin{problem}
Is there a graph \g for which $cc_r(G)/CT_r(G)< 1/H_{(|V(G)|-1)}$ where $H_n\approx \ln n$ is the $n$th harmonic number? Which graph on $n$ vertices minimises $cc_r(G)/CT_r(G)$?
\end{problem}

\acknowledgements{I am very grateful to Itai Benjamini, Erol Pekoz, and Peter Winkler for their aforementioned suggestions and other valuable discussions.}

\bibliographystyle{amsalpha}
\bibliography{../collective}
\end{document}